\newtheorem{theorem}{Theorem}[section]
\newtheorem{proposition}[theorem]{Proposition}
\newtheorem{corollary}[theorem]{Corollary}
\newtheorem{lemma}[theorem]{Lemma}
\theoremstyle{definition}
\theoremstyle{remark}
\newtheorem{remark}[theorem]{Remark}
\numberwithin{equation}{section}
\DeclareMathOperator{\Spec}{Spec}
\DeclareMathOperator{\Proj}{Proj}
\DeclareMathOperator{\Res}{Res}
\DeclareMathOperator{\D}{D}
\begin{document}

\title{Further evaluation of Wahl vanishing theorems for surface singularities
in characteristic $p$}

\author{Masayuki Hirokado}
\address{Graduate school of Information Sciences, Hiroshima City University, Ozuka-higashi, Asaminami-ku, Hiroshima 731-3194, Japan}
\email{hirokado@math.info.hiroshima-cu.ac.jp}


\subjclass[2010]{ Primary 14B05, 14G17; 
Secondary 14B07, 14J17.}



\keywords{Rational double points \and Wahl's vanishing theorems \and local cohomologies
\and equivariant resolutions \and equisingular deformations.
}

\begin{abstract}
Let $(\Spec R, \frak m)$ be a rational double point
defined over an algebraically closed field $k$
of characteristic $p\geq 0$.
We evaluate further the dimensions of the local cohomology groups
which were treated by Wahl in 1975 as vanishing theorem C (resp. D)
under the assumption that $p$
is a very good prime
(resp. good prime) with respect to $(\Spec R, \frak m)$. 
We use Artin's classification of rational double points
and completely determine the
dimensions $\dim_k H_E^1(S_X)$,
$\dim_k H_E^1(S_X\otimes \mathcal O_X(E))$,
supplementing Wahl's theorems.
In the proof we construct derivations concretely  
which do not lift to the minimal resolution
$X\to \Spec R$, as well as non-trivial equisingular families 
which inject into a versal deformation of the rational double point
$(\Spec R, \mathfrak m)$.
\end{abstract}

\maketitle

\markboth{MASAYUKI HIROKADO}{Wahl's vanishing theorems in characteristic $p$}


.

\section{Introduction}\label{sec:1}
In 1975 Jonathan Wahl proved the Grauert-Riemenschneider vanighing theorem
along with three other types of vanishing theorems for a surface singularity
$(\Spec R, \mathfrak m)$
defined over an algebraically closed field $k$ of characteristic 
$p\geq 0$~\cite{Wahl75}.
Last decades witnessed that these theorems on local 
cohomology groups have played influential roles 
in the theory  of surface singularities.
Among them are what he calls Theorems C, D which bear 
restrictions on the characteristic of the ground field~$k$.

\medskip

\noindent
{\bf Theorem C} {\it Let $X\to \Spec R$ be the minimal resolution of a rational double point.
Then $H_E^1(S_X)=0$, and in particular the resolution is equivariant, except the 
following cases:  }
$$\begin{array}{ll}
A_n\quad &p\,|\,n+1,\phantom{11111111111111111111111111111111}\\
D_n&p=2,\\
E_6&p=2, 3,\\
E_7&p=2,3,\\
E_8&p=2,3,5.\\
\end{array}
$$

\noindent
{\bf Theorem D} {\it Let $X\to \Spec R$ be the minimal resolution of a rational double point.
Then $H_E^1(S_X(E))=0$, except in the following cases:  }
$$\begin{array}{ll}
D_n\quad &p=2,\phantom{11111111111111111111111111111111}\\
E_6&p=2, 3,\\
E_7&p=2,3,\\
E_8&p=2,3,5.\\
\end{array}
$$
In these theorems $E=\cup_{i} E_i$ 
denotes the exceptional divisor in $X$
and its irreducible decomposition, $S_X$ is  
the sheaf of logarithmic derivations $S_X=\Theta_{X}(-\log E)$ (cf. \cite{Wahl76})
which fits in an exact sequence:
$$
0\to S_X\to \Theta_{X}\to \bigoplus_i \mathcal N_{E_i/X}\to 0.
$$

In this article we completely evaluate the dimensions of 
$k$-vector spaces
$H_E^1(S_X)$, $H_E^1(S_X\otimes \mathcal O_X(E))$ 
for each isomorphism class $(\Spec R, \mathfrak m)$
of rational double points. 

Before we proceed, there may be two things one should be aware of.  
One is that the classification of rational double points over an 
algebraically closed field $k$ of arbitrary characteristic 
was completed 
by Artin \cite{Artin77} in 1977.
Wahl's original vanishing theorems do not depend on it.
Second, in the theory of root systems
there are notions of good primes~\cite[Ch. I, \S 4]{Springer-Steinberg70}, 
very good primes~\cite[3.13]{Slodowy80},
which coincide beautifully with Wahl's theorems.
It follows that for irreducible root systems, the bad(=\,not good) 
prime numbers are:  
$$\begin{array}{ll}
B_n, C_n, D_n&p=2,\phantom{11111111111111111111111111111111}\\
E_6, E_7, F_4, G_2 \quad &p=2,3,\\
E_8&p=2,3,5.\\
\end{array}
$$
For root system $A_n$, $p$ is defined to be a very good prime, if $p$ does not devide $n+1$. 
This suggests that there are further relationships yet
to be discovered between rational surface singularities and 
Lie algebras.

Our main theorems are:

\noindent
{\bf Theorem~\ref{theorem:main01}}\ {\it
Let $X\to \Spec R$ be the minimal resolution of a rational double point
defined over an algebraically closed field $k$ of characteristic $p\ne 2$.
Then the following assertions hold.
\begin{itemize}
\item [{\rm i)}] The natural morphism $H^1(S_X)\to 
H^1(X\setminus E, S_X)$
is an inclusion.
\item [{\rm ii)}] The dimension of $H_E^1(S_X\otimes \mathcal O_X(E))$
is zero except:
$$
\begin{cases}
1 \quad &\text{for}\quad E_8^0\text{ in }p=5$ and $E_6^0, E_7^0, E_8^1 \text{ in }p=3, \\
2  \quad &\text{for}\quad E_8^0\text{ in }p=3. \\
\end{cases}
$$ 
\item [{\rm iii)}]  One has an isomorphism 
$$H^0(X\setminus E, \Theta_{X})/H^0(X, \Theta_{X})\cong H_E^1(S_X),
$$
whose dimension is zero with the following exceptions:
$$
\begin{cases}
1   \quad & \text{for}\quad A_n  \text{ with }p\,|\,(n+1),\\
1  &\text{for}\quad E_8^0\text{ in }p=5 \text{ and } E_6^1, E_7^0, E_8^1\text{ in }p=3, \\
2  &\text{for}\quad E_6^0, E_8^0\text{ in }p=3. \\
\end{cases}
$$ 
\end{itemize}
}

\noindent
{\bf Theorem~\ref{theorem:main02}}\ {\it
Let $X\to \Spec R$ be the minimal resolution of a rational double point
defined over an algebraically closed field $k$ of characteristic $2$.
Then we have the following assertions.
\begin{itemize}
\item [{\rm i)}] The natural morphism $H^1(S_X)\to 
H^1(X\setminus E, S_X)$
is an inclusion.
\item [{\rm ii)}] The dimension of $H_E^1(S_X\otimes \mathcal O_X(E))$
is zero except:
$$
\begin{cases}
1 \quad &\text{for}\quad E_6^0, E_7^2, E_8^3, \\
2 \quad &\text{for}\quad E_7^1, E_8^2, \\
3 \quad &\text{for}\quad E_7^0, E_8^1, \\
4 \quad &\text{for}\quad E_8^0, \\
n-1-r \quad &\text{for}\quad D_{2n}^r, D_{2n+1}^r.\\
\end{cases}
$$ 
\item [{\rm iii)}]  One has an isomorphism 
$$H^0(X\setminus E, \Theta_{X})/H^0(X, \Theta_{X})\cong H_E^1(S_X),
$$
whose dimension is zero with the following exceptions:
$$
\begin{cases}
1   \quad & \text{for}\quad A_n  \text{ with }2\,|\,(n+1),\\
1  \quad&\text{for}\quad E_6^0, E_7^3, E_8^3, \\
2  \quad&\text{for}\quad E_7^2, E_8^2, \\
3  \quad&\text{for}\quad E_7^1, E_8^1, \\
4  \quad&\text{for}\quad E_7^0, E_8^0, \\
n+1-r  \quad&\text{for}\quad D_{2n}^r, \\
n-r  \quad&\text{for}\quad D_{2n+1}^r. \\
\end{cases}
$$ 
\end{itemize}
}

\begin{corollary}
For a rational double point $(\Spec R, \frak m)$ of the following type, 
one has $H_E^1(S_X)=0$, 
in particular, the minimal resolution $X\to \Spec R$ is equivariant. 
$$
E_8^1\text{ in }p=5, \quad E_7^1, E_8^2 \text{ in }p=3, \quad E_6^1, E_8^4 \text{ in }p=2.
$$
\end{corollary}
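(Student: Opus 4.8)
The plan is to read the result directly off part~iii) of the two main theorems, and then to invoke the equivalence between the vanishing of $H_E^1(S_X)$ and equivariance of the minimal resolution that is recorded in Theorem~C. Recall that part~iii) of Theorem~\ref{theorem:main01} (the case $p\neq 2$) and of Theorem~\ref{theorem:main02} (the case $p=2$) establishes the isomorphism
$$
H^0(X\setminus E,\Theta_{X})/H^0(X,\Theta_{X})\cong H_E^1(S_X)
$$
and, more importantly for us, lists exhaustively the types for which this common dimension fails to vanish. Hence for each singularity named in the corollary it will suffice to verify that it does not occur in the relevant exception list, whence $H_E^1(S_X)=0$.

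I would carry this out characteristic by characteristic. For $p=5$ the only exception appearing in part~iii) of Theorem~\ref{theorem:main01} is $E_8^0$, so $E_8^1$ is not exceptional and $\dim_k H_E^1(S_X)=0$. For $p=3$ the exceptions recorded there are exactly $E_6^0, E_6^1, E_7^0, E_8^0$ and $E_8^1$; since neither $E_7^1$ nor $E_8^2$ appears, both satisfy $H_E^1(S_X)=0$. For $p=2$ I would instead consult part~iii) of Theorem~\ref{theorem:main02}: among its exceptions the only $E_6$ representative is $E_6^0$ and the only $E_8$ representatives are $E_8^0,\dots,E_8^3$, so that $E_6^1$ and $E_8^4$ are both absent and again yield $H_E^1(S_X)=0$.

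With $H_E^1(S_X)=0$ established for every type in the list, equivariance of $X\to\Spec R$ follows immediately from Theorem~C, which identifies the vanishing of $H_E^1(S_X)$ as the precise condition under which the minimal resolution is equivariant. I do not anticipate any genuine obstacle: the entire mathematical substance lies in the two main theorems, and this corollary merely isolates those finer Artin representatives which, although they sit over a prime that is bad for the underlying Dynkin diagram, nonetheless satisfy Wahl's vanishing. The only care required is bookkeeping---matching the superscript $r$ in Artin's notation against the correct exception list for each characteristic.
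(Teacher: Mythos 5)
Your proposal is correct and coincides with the paper's own (implicit) justification: the corollary is exactly the observation that none of the listed Artin types appears in the exception lists of part~iii) of Theorem~\ref{theorem:main01} (for $E_8^1$ in $p=5$ and $E_7^1, E_8^2$ in $p=3$) or of Theorem~\ref{theorem:main02} (for $E_6^1, E_8^4$ in $p=2$), so $H_E^1(S_X)=0$ in each case. Equivariance then follows just as you say; indeed the isomorphism $H^0(X\setminus E,\Theta_X)/H^0(X,\Theta_X)\cong H_E^1(S_X)$ in part~iii) makes this immediate, since the vanishing forces every derivation of $R$ to lift to the minimal resolution.
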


The assertion i) in both theorems gives an answer 
to the question asked in \cite[(5.18.2)]{Wahl75} for rational double points. 

\bigskip
Wahl's vanishing theorems have come into focus
in the study of three dimensional 
canonical singularities in arbitrary characteristic 
and we needed further evaluation of dimensions of $H_E^1(S_X)$, 
$H_E^1(S_X\otimes \mathcal O_X(E))$.
One has the smooth morphism from the simultaneous resolution functor
of a versal deformation of a rational double point $(\Spec R, \mathfrak m)$
\cite{Artin74} 
to the deformation functor of its minimal resolution $X$: 
$$
\Res \mathcal X/S \to \D_X.
$$
Non-zero elements of $H_E^1(S_X)$ correspond to families
of resolutions which are
non-trivial  in $\Res \mathcal X/S $,
but  map to trivial deformations of $X$.
This is linked with 
phenomena of three dimensional canonical singularities 
peculiar to positive characteristic $p$
which are recently observed in \cite{Hirokado16}, \cite{HIS13}, \cite{IR16},
\cite{ST17}.

\section{Preliminaries}\label{sec:2}

When we say $(\Spec R, \frak m)$ is a surface singularity, 
it is understood that $(R, \frak m)$ is a two dimensional excellent 
normal local ring with the maximal ideal $\frak m$.

We use the term {\it equisingular deformations} of the resolution in the sense 
of Wahl~\cite{Wahl76}.

The following proposition is in implicit form in Artin's 
work \cite[Corollary~4.6]{Artin74}.
Here $\mathfrak R$ denotes a locally quasi-separated algebraic space which 
represents the functor $\Res \mathcal X/S$ of simultaneous resolutions 
of families of a surface singularity $(\Spec R, \mathfrak m)$.

\begin{proposition}
Suppose $\mathcal X/S$ is a versal deformation of a rational 
surface singularity $(\Spec R, \frak m)$ at $s_0\in S$ and has minimum tangent
space dimension there.
Then the universal family $\mathcal X_{\mathfrak R}'/\mathfrak R$ is a versal deformation of $X$ with minimum tangent
space dimension, if and only if 
the minimal resolution $X\to \Spec R$ is equivariant. 
\end{proposition}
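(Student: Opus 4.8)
The plan is to deduce everything from the smooth morphism of deformation functors
$$\theta\colon \Res \mathcal X/S \longrightarrow \D_X$$
recalled in the introduction (Artin \cite[Corollary~4.6]{Artin74}), reducing the assertion to a single statement about $\ker(d\theta)$. Since $\mathcal X/S$ is versal for $(\Spec R,\mathfrak m)$ and $\theta$ is smooth, the universal family $\mathcal X_{\mathfrak R}'/\mathfrak R$ is already a versal deformation of $X$: its classifying morphism to the deformation functor $\D_X$ is exactly $\theta$, and a deformation is versal precisely when that morphism is smooth. Hence the entire force of the proposition lies in the word \emph{minimum}. The universal family has, in addition, minimum tangent space dimension exactly when $d\theta\colon T\mathfrak R\to H^1(X,\Theta_X)$ is an isomorphism; as $\theta$ is smooth, $d\theta$ is surjective, so the question reduces to whether $\ker(d\theta)=0$. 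Here the hypothesis that $\mathcal X/S$ itself has minimum tangent space dimension is what guarantees that $\ker(d\theta)$ records only the ambiguity of the contraction, rather than spurious trivial deformations inherited from a non-minimal base.

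The key step is therefore to identify $\ker(d\theta)$. A first-order element of this kernel is a simultaneous resolution whose underlying deformation of $X$ is trivial, that is, a contraction of the constant family $X\times_k k[\epsilon]$ onto a first-order deformation of $\Spec R$, taken modulo the reparametrisations induced by $H^0(X,S_X)$. I would organise this through the local cohomology sequence of $S_X$ along $E$,
$$H^0(X,S_X)\longrightarrow H^0(X\setminus E,S_X)\xrightarrow{\ \delta\ } H_E^1(S_X)\longrightarrow H^1(X,S_X),$$
together with two elementary remarks: on the complement $U=X\setminus E$ the logarithmic condition is empty, so $H^0(U,S_X)=H^0(U,\Theta_X)$; and $H^0(X,S_X)=H^0(X,\Theta_X)$ because the connecting term $\bigoplus_i H^0(\mathcal N_{E_i/X})$ vanishes, the $E_i$ being smooth rational curves of negative self-intersection. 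Consequently $\ker(d\theta)$ is the cokernel of $H^0(X,\Theta_X)\to H^0(X\setminus E,\Theta_X)$, equivalently the image $\operatorname{im}\delta\subseteq H_E^1(S_X)$.

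With this identification both implications are formal. The cokernel $H^0(X\setminus E,\Theta_X)/H^0(X,\Theta_X)$ measures precisely the failure of vector fields on the punctured resolution, that is of derivations of $R$, to extend across $E$; its vanishing is exactly the condition that every derivation of $R$ lift to $X$, i.e.\ that the minimal resolution be equivariant. Thus $\ker(d\theta)=0$ if and only if $X\to\Spec R$ is equivariant, and combined with the first paragraph this yields the proposition. I would close by noting the sharper picture for rational double points: assertion i) of Theorems \ref{theorem:main01}--\ref{theorem:main02} asserts that $H^1(X,S_X)\to H^1(X\setminus E,S_X)$ is injective, forcing $\operatorname{im}\delta=H_E^1(S_X)$, so that in that case $\ker(d\theta)$ is the \emph{whole} group $H_E^1(S_X)$ and equivariance is detected by the cohomological vanishing $H_E^1(S_X)=0$ of Wahl's Theorem C.

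I expect the genuine obstacle to be the middle step: matching the \emph{functorial} kernel $\ker(d\theta)$ with the \emph{cohomological} cokernel not merely on formal tangent spaces but at the level of Artin's algebraic space $\mathfrak R$. One must verify that deforming the contraction of $X\times_k k[\epsilon]$ over a first-order thickening of $\Spec R$ is governed by $H^0(X\setminus E,\Theta_X)/H^0(X,\Theta_X)$ with no further correction terms, and that the obstruction bookkeeping for the pair $X\to\Spec R$ contributes nothing outside the three groups appearing in the local cohomology sequence; this is where the rationality of $(\Spec R,\mathfrak m)$ and the vanishing $H^0(\mathcal N_{E_i/X})=0$ do the real work.
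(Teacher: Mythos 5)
Your proposal is correct, and its engine coincides with the paper's: a derivation $D$ of $R$ that does not lift to $X$ gives a first-order simultaneous resolution of the trivial deformation of $\Spec R$ --- compose $\pi\times\mathrm{id}$ with the automorphism $f\mapsto f+\epsilon D(f)$ of $\Spec R\times T$, $T=\Spec k[\epsilon]/(\epsilon^2)$ --- whose total space is the trivial deformation of $X$; this is exactly the paper's argument for $(\Rightarrow)$, phrased there as a contradiction with minimality of the tangent space of $\mathfrak R$. Where you genuinely diverge is in scope and packaging: the paper proves only that implication explicitly and disposes of $(\Leftarrow)$ with ``this is proved by Artin,'' whereas you derive both implications formally from the single identification $\ker(d\theta)\cong H^0(X\setminus E,\Theta_X)/H^0(X,\Theta_X)\cong\operatorname{im}\delta$, absorbing Artin's converse into your key lemma. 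That buys a self-contained proof and makes transparent what each hypothesis is for; the cost is that you need the full equality ``kernel $=$ cokernel,'' not merely the inclusion of the cokernel into the kernel that suffices for the paper's contradiction. The three verifications that close the step you flagged as the genuine obstacle are: (a) if the total space of a first-order simultaneous resolution is $X\times T$, its contraction $\Spec H^0(\mathcal O)$ is the trivial deformation $\Spec R\times T$, which uses rationality ($R^1\pi_*\mathcal O_X=0$ plus base change); (b) minimality of $\dim T_{s_0}S$ then forces the classifying tangent vector $T\to S$ to vanish, so the resolution map differs from $\pi\times\mathrm{id}$ by an automorphism of $\Spec R\times T$ fixing the closed fiber, i.e.\ by a derivation of $R$ (here $\Der_k(R)=H^0(X\setminus E,\Theta_X)$ by normality); and (c) the equivalence relation in $\Res\mathcal X/S(k[\epsilon]/(\epsilon^2))$ quotients precisely by automorphisms of $X\times T$ restricting to $\mathrm{id}_X$, i.e.\ by $H^0(X,\Theta_X)=H^0(X,S_X)$. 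One caveat: your closing identification of $\ker(d\theta)$ with \emph{all} of $H_E^1(S_X)$ invokes assertion i) of Theorems~\ref{theorem:main01} and \ref{theorem:main02}, which are proved later and only for rational double points, so it must remain a remark and cannot enter the proof of the proposition itself, which concerns arbitrary rational surface singularities.
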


\begin{proof} $(\Rightarrow)$
Suppose that 
there exists a non-zero element   
$\theta\in H^0(X\setminus E, \Theta_{X})/H^0(X, \Theta_{X})$.
Then one has a diagram 
$$
\begin{array}{ccc}
X\times T & & X\times T\\
\phantom{11} \downarrow_{\pi \times \mathrm{id}} & &\phantom{11} \downarrow_{\pi \times \mathrm{id}} \\
\Spec R\times T &\stackrel{\varphi}\longrightarrow &\Spec R\times T, \\
\end{array}
$$  
where $T:=\Spec k[\epsilon]/(\epsilon^2)$ and 
$\varphi$ is an isomorphism given by sending 
$f\in R$ to $f+\theta(f)\epsilon$. 
But no morphism $X\times T \to X\times T$
makes this diagram commutative.  
One may consider the map $\varphi\circ (\pi\times \mathrm{id})$
as a resolution of $\Spec R\times T$.
This gives a nontrivial extension of $T\to S$ to $T\to \mathfrak R$,
although $X\times T$ is a trivial deformation of $X$.
This contradicts the fact 
that $\mathcal X_{\mathfrak R}'/\mathfrak R$ has the minimum tangent space dimension as a 
versal deformation of $X$.

$(\Leftarrow)$ This is proved by Artin. 
\end{proof}

The following is a refinement of the inequality which  
Shepherd-Barron used in \cite[Proposition 3.1]{Shepherd-Barron01}.

\begin{proposition}
For a rational double or triple point $(\Spec R, \frak m)$, 
one has the inequality
$$
\dim_k H^1(\Theta_{X})+\dim_k H^0(X\setminus E, \Theta_{X})/H^0(X, \Theta_{X}) \leq \tau_0,
$$
where $\tau_0$ is the Tjurina number of $(\Spec R, \frak m)$.
\end{proposition}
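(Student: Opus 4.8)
The plan is to compare the deformation theory of the resolution $\pi\colon X\to Y:=\Spec R$ with that of the singularity, through the transitivity triangle of cotangent complexes, using rationality as the essential input. The first observation is that the quantity to be bounded is \emph{already} in the form $\dim_k H^1(\Theta_X)+\dim_k N$, where $N:=H^0(X\setminus E,\Theta_X)/H^0(X,\Theta_X)$ is the finite-length module of derivations of $R$ that fail to lift to $X$; by the isomorphism recorded above, $\dim_k N=\dim_k H_E^1(S_X)$. On the other side I take $\tau_0=\dim_k T^1_Y$ with $T^1_Y=\Ext^1_Y(L_Y,\mathcal O_Y)$ the tangent space to the versal deformation, so the task is to show $\dim_k H^1(\Theta_X)+\dim_k N\le \dim_k T^1_Y$.

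Next I would apply $\mathrm{RHom}_X(-,\mathcal O_X)$ to the transitivity triangle $L\pi^*L_Y\to L_X\to L_{X/Y}$. Since $X$ is smooth, $L_X=\Omega_X$ and $\Ext^i_X(L_X,\mathcal O_X)=H^i(\Theta_X)$; the rationality isomorphism $R\pi_*\mathcal O_X=\mathcal O_Y$ identifies, by derived adjunction, $\Ext^i_X(L\pi^*L_Y,\mathcal O_X)$ with the tangent cohomology $T^i_Y$. Two further inputs trim the resulting long exact sequence. Because $\pi$ is affine with fibres of dimension $\le 1$, the Leray spectral sequence gives $H^2(\Theta_X)=H^0(Y,R^2\pi_*\Theta_X)=0$; and $H^0(\Theta_X)\hookrightarrow \Der_k R=T^0_Y$ is exactly the inclusion of liftable derivations, with cokernel $N$. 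After the infinite-dimensional head collapses, what remains is the exact sequence of finite-dimensional $k$-spaces
\[
0\to N\to A^1\to H^1(\Theta_X)\xrightarrow{\ c\ } T^1_Y\to A^2\to 0,\qquad A^i:=\Ext^i_X(L_{X/Y},\mathcal O_X),
\]
all of whose terms are finite-dimensional since $\pi$ is an isomorphism off $E$, so the $A^i$ have support inside the proper curve $E$. Vanishing of its Euler characteristic yields the clean identity
\[
\tau_0=\dim_k H^1(\Theta_X)+\dim_k N+\bigl(\dim_k A^2-\dim_k A^1\bigr),
\]
so that the proposition is \emph{equivalent} to the single inequality $\dim_k A^2\ge \dim_k A^1$.

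It then remains to prove $\dim_k\Ext^2_X(L_{X/Y},\mathcal O_X)\ge \dim_k\Ext^1_X(L_{X/Y},\mathcal O_X)$, and this relative-cotangent-complex estimate is where I expect the real difficulty to lie. Since $L_{X/Y}$ has proper support on $E$, Grothendieck--Serre duality on the smooth surface $X$ identifies $A^i$ with the dual of the hypercohomology $H^{2-i}(X,L_{X/Y}\otimes^{L}\omega_X)$, reducing the claim to $h^0\ge h^1$ for $L_{X/Y}\otimes\omega_X$ on the tree of rational curves $E$. For a rational double point the resolution is crepant, $\omega_X\cong\mathcal O_X$, and I would extract this from the (very few) nonzero cohomology sheaves of $L_{X/Y}$ along $E$; for a rational triple point one must instead carry the discrepancy, writing $\omega_X=\pi^*\omega_Y(\sum_i a_iE_i)$ and tracking this correction through the duality, after which only the inequality survives. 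The main obstacle is precisely that $\pi$ is not flat, so $L_{X/Y}$ genuinely carries a nonzero $\mathcal H^{-1}$, and the whole argument hinges on controlling its contribution so that it cannot force $\dim_k A^1$ above $\dim_k A^2$. As a consistency check, in the equivariant characteristic-zero case one has $N=0$ and $A^1\cong A^2$, whence the identity degenerates to $\tau_0=\dim_k H^1(\Theta_X)$, recovering the equality underlying Shepherd-Barron's original estimate; the new term $\dim_k N$ is exactly the refinement.
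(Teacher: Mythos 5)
Your formal reduction is essentially sound: the transitivity triangle, the adjunction $\mathrm{RHom}_X(L\pi^*L_Y,\mathcal O_X)\simeq \mathrm{RHom}_Y(L_Y,R\pi_*\mathcal O_Y \otimes)\simeq \mathrm{RHom}_Y(L_Y,\mathcal O_Y)$ coming from rationality, the identification of $\mathrm{coker}\,(H^0(\Theta_X)\to T^0_Y)$ with $N$, and the vanishing $H^2(\Theta_X)=0$ (the correct reason being that $\Spec R$ is affine and the \emph{proper} morphism $\pi$ has fibres of dimension $\le 1$ --- $\pi$ is certainly not affine) together give your five-term exact sequence and the identity
$$
\tau_0=\dim_k H^1(\Theta_X)+\dim_k N+\bigl(\dim_k A^2-\dim_k A^1\bigr).
$$
But this is where your proof stops, and the remaining claim $\dim_k A^2\ge\dim_k A^1$ is, by your own identity, \emph{equivalent} to the proposition being proved; so at this point nothing has actually been established. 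Your final paragraph is a plan rather than an argument: for a rational double point duality turns the claim into $h^0(\Omega_{X/Y})+h^1\bigl(\mathcal H^{-1}(L_{X/Y})\bigr)\ge h^1(\Omega_{X/Y})$, and you offer no control on $h^1(\Omega_{X/Y})$ or on the torsion sheaf $\mathcal H^{-1}(L_{X/Y})$ (nonzero precisely because $\pi$ is not flat, as you yourself flag). In characteristic $p$ these groups are exactly where the pathologies studied in this paper live --- the whole point is that $H^1(S_X)$ and $N$ can fail to vanish --- so no soft or general argument can be expected to produce the inequality; for rational triple points, ``carrying the discrepancy through the duality'' is likewise unsubstantiated.

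The paper closes this gap with a global input that your argument never uses: Artin's theory of simultaneous resolution. For a versal deformation $\mathcal X/S$ of $(\Spec R,\mathfrak m)$ there is a smooth morphism of functors $\Res\mathcal X/S\to \D_X$, and the tangent space $\Res\mathcal X/S(k[\epsilon]/(\epsilon^2))$ has dimension exactly $\tau_0$ \cite{Artin74}. Smoothness makes the induced $k$-linear map onto $\D_X(k[\epsilon]/(\epsilon^2))=H^1(\Theta_X)$ surjective, and every class $\theta\in N$ yields a nonzero element of its kernel: twisting the trivial family $X\times T\to \Spec R\times T$ by the automorphism $\mathrm{id}+\epsilon\theta$ downstairs gives a simultaneous resolution that is nontrivial in $\Res\mathcal X/S$ but trivial as a deformation of $X$ (this is the argument of the paper's first proposition). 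Rank--nullity then gives $\dim_k H^1(\Theta_X)+\dim_k N\le\tau_0$ at once. In effect, Artin's theorem is precisely the inequality $\dim_k A^2-\dim_k A^1\ge 0$ that you still owe; to complete your approach you would either have to import that theorem, or compute the hypercohomology of $L_{X/Y}\otimes\omega_X$ singularity by singularity, which is where all the real work would lie.
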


\begin{proof}
This is based on the fact that there is a smooth morphism of functors
$\Res\mathcal X/S \to \D_{X}$~\cite[Lemma 3.3]{Artin74},
and the tangent space  
$\Res\mathcal X/S(k[\epsilon]/(\epsilon^2))$ 
has its dimension $\tau_0$ \cite[Theorem 3]{Artin74}. 
The inequality is the dimension formula 
for the surjective $k$-linear mapping
$\Res\mathcal X/S(k[\epsilon]/(\epsilon^2)) \to \D_{X}(k[\epsilon]/(\epsilon^2))$.
\end{proof}

\begin{proposition}
Let $(\Spec R, \frak m)$ be a rational surface singularity defined over an
algebraically closed field $k$ of characteristic $p\geq 0$, 
and $\pi :X\to \Spec R$ be its minimal resolution
with the reduced exceptional divisor 
$\displaystyle E=\cup_i E_i$.
Then we have the equalities
\begin{eqnarray*}
\dim_k H_E^1(S_X)=\dim_k H^1(S_X\otimes \mathcal O_{X}(E+2K_{X})), \\
\dim_k H_E^1(S_X\otimes \mathcal O_{X}(E+2K_{X}))=\dim_k H^1(S_X),
\end{eqnarray*}
where $S_X$ is a locally free sheaf defined as the kernel 
of the surjection from the tangent to normal sheaves
$\displaystyle \Theta_{X}\to \bigoplus_i \mathcal N_{E_i/X}$. 
In particular, the local cohomology groups $H_E^1(S_X)$, 
$H_E^1(S_X\otimes \mathcal O_{X}(E+2K_{X}))$
are finite dimensional $k$-vector spaces.
\end{proposition}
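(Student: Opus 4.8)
The plan is to deduce both equalities from a single instance of Grothendieck--Serre duality with supports on the smooth surface $X$, applied to the two locally free sheaves $S_X$ and $S_X\otimes\mathcal O_X(E+2K_X)$. Concretely, I would first record the duality isomorphism
$$
\bigl(H^i_E(X,\mathcal F)\bigr)^{\vee}\;\cong\;H^{2-i}\!\left(X,\ \mathcal F^{\vee}\otimes\omega_X\right),\qquad \omega_X=\mathcal O_X(K_X),
$$
valid for every locally free coherent sheaf $\mathcal F$ on $X$, where on the left ${}^{\vee}$ denotes the Matlis dual over $R$ and $\mathcal F^{\vee}=\mathcal Hom(\mathcal F,\mathcal O_X)$ on the right. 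The case $i=1$ is the one we need, since then $2-i=1$ as well.

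To obtain this isomorphism I would proceed as follows. Since $\pi$ is proper and $E=\pi^{-1}(\mathfrak m)$, one has $R\Gamma_E(X,\mathcal F)\cong R\Gamma_{\mathfrak m}(\Spec R,\,R\pi_*\mathcal F)$ as complexes of $R$-modules. A normal surface singularity is Cohen--Macaulay of dimension two, so $R$ carries the normalized dualizing complex $\omega_R^{\bullet}=\omega_R[2]$, and Grothendieck's local duality gives $R\Gamma_{\mathfrak m}(\Spec R,\,R\pi_*\mathcal F)^{\vee}\cong R\Hom_R(R\pi_*\mathcal F,\ \omega_R^{\bullet})$. Relative Grothendieck duality for the proper morphism $\pi$ then rewrites the right-hand side as $R\pi_*\,R\Hom_X(\mathcal F,\ \pi^{!}\omega_R^{\bullet})$, and since $X$ is smooth of dimension two over $k$ one has $\pi^{!}\omega_R^{\bullet}\cong\omega_X[2]$ and $R\Hom_X(\mathcal F,\omega_X)\cong\mathcal F^{\vee}\otimes\omega_X$. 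Taking cohomology, and using that $\Spec R$ is affine so that $H^j(X,\mathcal G)\cong\Gamma(\Spec R,\,R^j\pi_*\mathcal G)$, yields the displayed duality.

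Next I would compute $S_X^{\vee}$. From the defining sequence $0\to S_X\to\Theta_X\to\bigoplus_i\mathcal N_{E_i/X}\to0$ and the resolutions $0\to\mathcal O_X\to\mathcal O_X(E_i)\to\mathcal N_{E_i/X}\to0$ one gets $\det\bigl(\bigoplus_i\mathcal N_{E_i/X}\bigr)=\mathcal O_X(E)$, hence $\det S_X=\det\Theta_X\otimes\mathcal O_X(-E)=\mathcal O_X(-K_X-E)$. As $S_X$ has rank two, $S_X^{\vee}\cong S_X\otimes(\det S_X)^{-1}\cong S_X\otimes\mathcal O_X(K_X+E)$. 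Substituting $\mathcal F=S_X$ into the duality gives $\mathcal F^{\vee}\otimes\omega_X\cong S_X\otimes\mathcal O_X(2K_X+E)$, which is the first equality; substituting $\mathcal F=S_X\otimes\mathcal O_X(E+2K_X)$ gives $\mathcal F^{\vee}\otimes\omega_X\cong S_X$, which is the second. The finiteness assertion is then automatic: $H^1(X,\mathcal G)\cong\Gamma(\Spec R,R^1\pi_*\mathcal G)$, and $R^1\pi_*\mathcal G$ is a coherent sheaf supported at $\mathfrak m$ (because $\pi$ is an isomorphism over the punctured spectrum of the normal surface $R$), hence of finite length, so all four groups are finite-dimensional over $k$.

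The main obstacle I anticipate is not the formal manipulation but the careful bookkeeping in the duality step: one must fix compatible normalizations of the dualizing complexes so that $\pi^{!}\omega_R^{\bullet}$ is genuinely $\omega_X[2]$ rather than a shift or twist thereof, and one must verify that the Matlis dual interchanges $H^i_E$ with $H^{2-i}$ without introducing spurious completions. The latter point is harmless here precisely because the groups $R^{j}\pi_*\mathcal G$ have finite length and are therefore already $\mathfrak m$-adically complete, so Matlis duality preserves $k$-dimension; this is what lets one pass from an abstract duality of (topological) modules to the equalities of dimensions asserted in the statement.
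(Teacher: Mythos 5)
Your proof is correct and follows essentially the same route as the paper: Grothendieck duality with supports applied to the self-duality $S_X^{\vee}\cong S_X\otimes\mathcal O_X(E+K_X)$ of the rank-two bundle $S_X$, followed by a Leray spectral sequence argument over the affine base for finiteness. The only cosmetic difference is that you obtain $\det S_X\cong\mathcal O_X(-E-K_X)$ by multiplicativity of determinants in the defining sequence $0\to S_X\to\Theta_X\to\bigoplus_i\mathcal N_{E_i/X}\to 0$, whereas the paper derives the same isomorphism from Wahl's dual sequence $0\to\Omega_X\to S_X^{\vee}\to\bigoplus_i\mathcal O_{E_i}\to 0$.
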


\begin{proof}
We use the exact sequence given by Wahl~\cite[(1.2)]{Wahl85}
$$
0\to \Omega_{X} \to S_X^{\vee} \to \bigoplus_i \mathcal O_{E_i} \to 0,
$$
from which follows 
$\wedge^2(S_X^{\vee})\cong \mathcal O_{X}(E+K_{X})$ 
hence $S_X^{\vee}\cong S_X\otimes \mathcal O_{X}(E+K_{X})$.
We combine this with the Grothendieck local duality theorem, 
to have the equalities
$\dim_k H_E^1(S_X)=\dim_k H^1(S_X^{\vee}\otimes K_{X})
=\dim_k H^1(S_X\otimes \mathcal O_{X}(E+2K_{X}))$ as well as
$\dim_k H_E^1(S_X\otimes \mathcal O_{X}(E+2K_{X}))=
\dim_k H^1(S_X^{\vee}\otimes \mathcal O_{X}(-E-K_{X}))
=\dim_k H^1(S_X)$.
Then we use the Leray spectral sequence
$E_1^{i,j}:=R^i\eta_*R^j\pi_*\mathcal F \Rightarrow R^{i+j}(\eta\circ\pi)_*\mathcal F$,
where $\eta: \Spec R\to \Spec k$ is the structual morphism and
$\mathcal F\cong S_X$ or 
$S_X\otimes \mathcal O_{X}(E+2K_{X})$.
Then we find the $k$-vector spaces in question are of finite dimension.
This indeed follows from the exact sequence 
$0\to H^1(\pi_*\mathcal F)\to H^1(\mathcal F)\to H^0(R^1\pi_*\mathcal F)$
with the first term zero, 
and the last term of finite dimension. 
\end{proof}

The following theorem was pointed out by Liedtke and Satriano 
\cite[Proposition~4.6]{Liedtke-Satriano14}.

\begin{theorem}
For a rational double point $(\Spec R, \frak m)$, we have the equality
$$
\dim_k H^1(\Theta_{X})=\dim_k  H_E^1(\Theta_{X})
=\#\{-2 \mbox{ curves in }E\}+h^1(S_X),
$$
where $\pi:X\to \Spec R$ is the minimal resolution with 
the exceptional divisor $E$.
\end{theorem}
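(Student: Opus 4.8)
The plan is to prove the two asserted equalities separately: the first, $\dim_k H^1(\Theta_{X})=\dim_k H_E^1(\Theta_{X})$, by duality, and the second by a direct cohomology computation from the defining sequence of $S_X$.

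For the first equality I would invoke the Grothendieck local duality already exploited in the preceding proposition, namely $\dim_k H_E^1(\mathcal F)=\dim_k H^1(\mathcal F^{\vee}\otimes K_{X})$ for a locally free sheaf $\mathcal F$ on the smooth surface $X$. The crucial point is that the minimal resolution of a rational double point is crepant: every $E_i$ is a $(-2)$-curve, so the genus formula gives $K_{X}\cdot E_i=0$ for all $i$ and hence $\omega_{X}\cong\mathcal O_{X}$. Since $\Theta_{X}$ has rank $2$, one has $\Theta_{X}^{\vee}=\Omega_{X}^1\cong\Theta_{X}\otimes\omega_{X}\cong\Theta_{X}$, so $\Theta_{X}^{\vee}\otimes K_{X}\cong\Theta_{X}$ and the duality isomorphism yields $\dim_k H_E^1(\Theta_{X})=\dim_k H^1(\Theta_{X})$ at once.

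For the second equality I would feed the defining sequence $0\to S_X\to\Theta_{X}\to\bigoplus_i\mathcal N_{E_i/X}\to 0$ into the long exact cohomology sequence on $X$. Since each $E_i\cong\mathbb{P}^1$ is a $(-2)$-curve, $\mathcal N_{E_i/X}$ is the degree $-2$ line bundle on $E_i$, so $H^0(\mathcal N_{E_i/X})=0$ and $\dim_k H^1(\mathcal N_{E_i/X})=1$. Because $\pi$ is proper with fibres of dimension $\le 1$ over the affine base $\Spec R$, the Leray spectral sequence forces $H^2(X,\mathcal G)=0$ for every coherent $\mathcal G$, in particular $H^2(X,S_X)=0$. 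The long exact sequence therefore collapses to
\[
0\to H^1(S_X)\to H^1(\Theta_{X})\to\bigoplus_i H^1(\mathcal N_{E_i/X})\to 0,
\]
and since $\dim_k\bigoplus_i H^1(\mathcal N_{E_i/X})=\#\{-2\text{ curves in }E\}$, counting dimensions gives $\dim_k H^1(\Theta_{X})=h^1(S_X)+\#\{-2\text{ curves in }E\}$. Combined with the first equality this is the assertion.

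The crux of the argument is the identification $\omega_{X}\cong\mathcal O_{X}$ and the resulting self-duality of $\Theta_{X}$ under $\mathcal F\mapsto\mathcal F^{\vee}\otimes K_{X}$, which is precisely what converts the local cohomology $H_E^1(\Theta_{X})$ into the ordinary $H^1(\Theta_{X})$. The remaining ingredients — the computation of $H^*$ of the degree $-2$ normal bundles on the rational curves $E_i$ and the vanishing $H^2(X,-)=0$ coming from the affineness of $\Spec R$ — are routine.
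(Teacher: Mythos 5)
Your proof is correct, and it reaches the conclusion by a partially different route than the paper. The first equality you prove exactly as the paper does: Grothendieck local duality in the form $\dim_k H_E^1(\mathcal F)=\dim_k H^1(\mathcal F^{\vee}\otimes K_X)$, combined with the rank-two identity $\Omega_X^1\cong \Theta_X\otimes K_X$ and the triviality of $\omega_X$. For the second equality, however, the paper follows Wahl's argument in \cite[Theorem 6.1]{Wahl75}: it uses the twisted sequence $0\to \Theta_X\to S_X(E)\to \Theta_E\otimes\mathcal N_{E/X}\to 0$ and its \emph{local} cohomology sequence $0\to H_E^0(\Theta_E\otimes\mathcal N_{E/X})\to H_E^1(\Theta_X)\to H_E^1(S_X(E))\to 0$, where the first term contributes $\#\{-2\text{ curves}\}$ and local duality identifies $\dim_k H_E^1(S_X(E))$ with $h^1(S_X)$. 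You instead feed the defining sequence $0\to S_X\to\Theta_X\to\bigoplus_i\mathcal N_{E_i/X}\to 0$ into \emph{ordinary} cohomology, using $h^0(\mathcal O_{\mathbb P^1}(-2))=0$, $h^1(\mathcal O_{\mathbb P^1}(-2))=1$, and the vanishing $H^2(X,S_X)=0$ from Leray over the affine base. Your version is more elementary and self-contained: it avoids the sheaf $\Theta_E\otimes\mathcal N_{E/X}$ on the possibly nodal curve $E$ and all local-cohomology bookkeeping except the single duality $\dim_k H_E^1(\Theta_X)=\dim_k H^1(\Theta_X)$. What the paper's route buys is that it exhibits $H_E^1(S_X(E))$ --- precisely the group whose dimension is evaluated in part ii) of the main theorems --- as an explicit quotient of $H_E^1(\Theta_X)$, so the identity $\dim_k H_E^1(S_X(E))=h^1(S_X)$ comes along for free and the whole computation stays in the local cohomology framework tied to equisingular deformations.

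One point you should tighten: from $K_X\cdot E_i=0$ alone you only get that $\omega_X$ is numerically trivial along $E$. To conclude $\omega_X\cong\mathcal O_X$ you also need that $R$ is Gorenstein (an RDP is a hypersurface singularity, so $\omega_R\cong R$) and that the discrepancies in $K_X=\pi^*K_{\Spec R}+\sum_i a_iE_i$ vanish, which follows from $K_X\cdot E_j=0$ together with negative definiteness of the intersection matrix. This is standard for Du Val singularities, and the paper uses it just as implicitly (its appeal to $\wedge^2\Omega_X\cong K_X$ likewise only yields $\dim_k H_E^1(\Theta_X)=\dim_k H^1(\Theta_X\otimes\mathcal O_X(2K_X))$ until one trivializes $K_X$), but since your argument leans on $\omega_X\cong\mathcal O_X$ twice, it deserves a sentence.
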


\begin{proof}
We repeat the argument in \cite[Theorem 6.1]{Wahl75}.
One has
$$
0\to \Theta_{X}\to S_X(E) \to \Theta_E\otimes \mathcal N_{E/X}\to 0,
$$
which gives
$$
0 \to H_E^0( \Theta_E\otimes \mathcal N_{E/X})\to H_E^1(\Theta_{X})\to
H_E^1(S_X(E))\to 0.
$$
The local duality theorem gives the second equality. 
For the first equality we use the local duality theorem and the 
standard isomorphism $\wedge^2 \Omega_X\cong K_X$.
\end{proof}

\section{Lower estimates of 
$\dim_k H^1(S_X)$, $\dim_k H_E^1(S_X)$}\label{sec:3}

Observing Artin's list of rational double points~\cite{Artin74} 
enables one to get the lower bounds of dimensions of 
$H^1(S_X)$ and $H_E^1(S_{X})$.

\begin{proposition}\label{prop:equisingular}
Let $X\to \Spec R$ be the minimal resolution of a rational double point
$(\Spec R, \mathfrak m)$.
If $(\Spec R, \mathfrak m)$ is of the following type,
then one has 
non-trivial equisingular deformations which 
provide
the lower bound of $\dim_kH^1(S_X)$ as
$$
\begin{cases}
1 \quad &\text{for}\quad E_8^0\text{ in }p=5$ and \ $E_8^1, \ E_6^0, \ E_7^0\text{ in }p=3, \\
2  \quad &\text{for}\quad E_8^0\text{ in }p=3. \\
\end{cases}
$$ 
Moreover, the subspace these equisingular families generate does not 
collapse in the tangent map
$$H^1(S_X)\to H^1(X\setminus E,S_X).
$$ 

\end{proposition}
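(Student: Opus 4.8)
The plan is to realize each required lower bound by an explicit first-order equisingular family of Artin's normal form, and then to trace its class through $H^1(S_X)$ and on into $H^1(X\setminus E,S_X)$. First I would write down, from Artin's tables, the hypersurface equation $f(x,y,z)=0$ of each singularity in the list together with its minimal resolution. The source of the extra deformation directions is that in the bad prime the Jacobian ideal $(\partial_x f,\partial_y f,\partial_z f)$ degenerates---for instance $\partial_y(z^2+x^3+y^5)=5y^4$ becomes zero when $p=5$---so the Tjurina algebra $\mathcal O/(f,\partial_x f,\partial_y f,\partial_z f)$ is strictly larger than in characteristic $0$. I would then select monomials $g$ representing classes in this algebra that will turn out to be equisingular: one such $g$ in every case except $E_8^0$ in $p=3$, where two linearly independent ones are produced, matching the claimed bounds.

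Next I would show that each family $\mathcal X=\{f+\epsilon g=0\}$ over $T=\Spec k[\epsilon]/(\epsilon^2)$ is equisingular, that is, admits a simultaneous resolution $\widetilde{\mathcal X}\to\mathcal X$ restricting over the closed point to $\pi\colon X\to\Spec R$ and carrying the same dual graph of $E$. I would do this by following the successive blow-ups that resolve $f=0$ chart by chart and checking that the perturbation $\epsilon g$ neither smooths the point nor alters the exceptional configuration. Such a simultaneous resolution is a first-order deformation of the pair $(X,E)$, hence determines a class $\xi_g\in H^1(X,\Theta_X(-\log E))=H^1(S_X)$; that reading $\xi_g$ off the family is legitimate is exactly the content of the smooth morphism $\Res\mathcal X/S\to\D_X$ recalled in Section~\ref{sec:2}.

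To obtain the lower bound and the non-collapse assertion at one stroke, I would compute the image of the $\xi_g$ under the restriction $H^1(S_X)\to H^1(X\setminus E,S_X)$. Over the punctured resolution one has $X\setminus E\cong\Spec R\setminus\{\mathfrak m\}=:U$ and $S_X|_{X\setminus E}=\Theta_{X\setminus E}$, so this image is precisely the class in $H^1(U,\Theta_U)$ of the restricted family $\mathcal X|_U$. Since each $g$ is non-zero in the Tjurina algebra, the family injects into the versal deformation of $(\Spec R,\mathfrak m)$, so its restriction to $U$ is a non-trivial deformation; hence the images of the $\xi_g$ are non-zero and, by independence of the $g$, linearly independent. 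This at once gives the asserted lower bound for $\dim_k H^1(S_X)$ and shows that the subspace they span injects under the tangent map, separating these genuine equisingular classes from the classes in the image of $H^1_E(S_X)$, which by the discussion following the first proposition of Section~\ref{sec:2} come instead from derivations that fail to lift to $X$.

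The main obstacle is the equisingularity verification of the second step. In characteristic $p$ the coordinate changes that in characteristic $0$ would absorb $\epsilon g$ back into $f$ are unavailable precisely for the bad primes at hand, so one cannot argue abstractly that the singularity type persists; instead one must track the blow-up charts explicitly and confirm that the $ADE$ configuration is reproduced rigidly to first order. Once that is in place, the remaining steps are bookkeeping: extracting $\xi_g$ from the simultaneous resolution and checking non-vanishing of the restricted class in $H^1(U,\Theta_U)$.
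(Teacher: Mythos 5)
Your overall architecture coincides with the paper's: exhibit explicit deformations of Artin's normal forms admitting simultaneous resolution, then get both the lower bound and the non-collapse assertion from the fact that each family injects into the versal deformation of $(\Spec R,\mathfrak m)$. But there is a genuine gap: the step you describe as ``select monomials $g$ \dots that will turn out to be equisingular,'' together with the chart-by-chart verification you defer, \emph{is} the entire content of the proposition, and you never produce the monomials nor check equisingularity. Being non-zero in the Tjurina algebra is far from sufficient. For $E_8^0$ in $p=5$, where $f=z^2+x^3+y^5$ and the Tjurina ideal is $(x^2,z,y^5)$, the class $y^4$ is non-zero in the Tjurina algebra, yet $z^2+x^3+y^5+sy^4=z^2+x^3+y^4(y+s)$ has an $E_6$ singularity on general fibers, so the dual graph jumps and the family is not equisingular. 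The proof in the paper consists precisely of the right choices: $sxy^4$ for $E_8^0$ in $p=5$; $sx^2y^2$ for $E_6^0$, $E_7^0$ and $E_8^1$ in $p=3$; and the two independent directions $s_1x^2y^3$, $s_2x^2y^2$ for $E_8^0$ in $p=3$ --- together with the observation that each such family has constant dual graph because its general fiber is again a rational double point of the same type but higher Artin coindex (e.g.\ $E_8^0$ deforms to $E_8^1$), so that blowing up the singular locus gives a simultaneous resolution with no base extension.

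A secondary issue is your choice to work over $T=\Spec k[\epsilon]/(\epsilon^2)$: at first order there is no ``general fiber,'' so your criterion that the perturbation ``neither smooths the point nor alters the exceptional configuration'' has no direct meaning, and simultaneous resolvability would have to be formulated and checked scheme-theoretically over the dual numbers --- exactly the delicate point you postponed. The paper avoids this by using one-parameter families over $\Spec k[s]$, where equisingularity is certified simply by identifying the singularity of $\mathcal X_s$, $s\neq 0$, in Artin's classification. Your final step (non-triviality of the restricted class in $H^1(X\setminus E,S_X)$ via injection into the versal deformation, hence linear independence and the lower bound) is correct and agrees with the paper's closing argument; it is the production and verification of the equisingular families that is missing.
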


\begin{proof}
We give concrete one parameter deformations 
$\mathcal X\to \Spec k[s]$ and construct simultaneous
resolutions by blowing up the singular loci.

For $E_8^0\text{ in }p=5$, we have a deformation
$z^2+x^3+y^5+sxy^4=0$.
This has a simultaneous resolution with no base extension,
and has $E_8^0$ singularity on the special fiber $\mathcal X_0$, and 
$E_8^1$ singularity on a general fiber $\mathcal X_s$ ($s\ne 0$).

Similarly we present non-trivial one-parameter deformations 
which admit simultaneous resolutions with no base extension.
These result in equisingular deformations.
For $E_7^0\text{ in }p=3$, $z^2+x^3+xy^3+sx^2y^2=0$.
For $E_6^0\text{ in }p=3$, $z^2+x^3+y^4+sx^2y^2=0$.
For $E_8^1\text{ in }p=3$, $z^2+x^3+y^5+x^2y^3+sx^2y^2=0$.
For $E_8^0\text{ in }p=3$, $z^2+x^3+y^5+s_1x^2y^3=0$ and
$z^2+x^3+y^5+s_2x^2y^2=0$.

Each family
is induced by an injection to a versal deformation of the 
rational doble point $(\Spec R, \mathfrak m)$,
and one gets the last assertion.  
\end{proof}

\begin{proposition}
Let $X\to \Spec R$ be the minimal resolution of a rational double point
$(\Spec R, \mathfrak m)$ of the following type.
Then one calculates the dimension of $H^0(X\setminus E, S_X)/H^0(X, S_X)$ 
as  

$$
\begin{cases}
1 \quad  & \text{for}\quad A_n  \text{ with }p\,|\,(n+1),\\
1  &\text{for}\quad E_8^0\text{ in }p=5 \text{ and } E_7^0\text{ in }p=3, \\
2  &\text{for}\quad E_6^0, E_8^0\text{ in }p=3. \\
\end{cases}
$$ 
For $E_8^1$, $E_6^1$ in $p=3$, one has 
$$\dim_k H^0(X\setminus E, S_X)/H^0(X, S_X)\geq 1.
$$
\end{proposition}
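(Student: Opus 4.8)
The plan is to identify the quotient $H^0(X\setminus E, S_X)/H^0(X, S_X)$ with the space of $k$-derivations of $R$ that fail to lift to the minimal resolution, and then to exhibit such derivations explicitly from Artin's normal forms. First I would observe that $\pi$ restricts to an isomorphism $X\setminus E\stackrel{\sim}{\longrightarrow}\Spec R\setminus\{\mathfrak m\}$ and that $S_X$ agrees with $\Theta_X$ away from $E$; since $R$ is normal of dimension two one has $R=\Gamma(\Spec R\setminus\{\mathfrak m\},\mathcal O)$, so taking global sections over the punctured spectrum gives $H^0(X\setminus E, S_X)\cong \Der_k(R)$. On the other hand every exceptional curve is a $(-2)$-curve, so $\bigoplus_i H^0(\mathcal N_{E_i/X})=\bigoplus_i H^0(\mathbb P^1,\mathcal O(-2))=0$, and the defining sequence of $S_X$ yields $H^0(X,S_X)=H^0(X,\Theta_X)$. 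Thus $H^0(X,S_X)$ is exactly the submodule of derivations of $R$ that extend to regular vector fields on $X$, and the quotient in question is the space of non-liftable derivations, which is finite dimensional by the finiteness proposition above.

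Next, for each singularity in the list I would write down Artin's equation $f=0$ and look for derivations that exist only in the stated characteristic. The mechanism is that a monomial $x_i^{e}$ occurring in $f$ contributes a term $e\,x_i^{e-1}$ to $\theta(f)$; when $p\mid e$ this term drops, so a low-order field such as $\partial/\partial x_i$ (suitably corrected to keep $\theta(f)\in(f)$) becomes a genuine derivation of $R$ that is not present in other characteristics. Concretely this produces $\partial_z$ for $A_n$ when $p\mid(n+1)$ (from $z^{n+1}$), a field in the $y$-direction for $E_8^0$ when $p=5$ (from $y^5$), and fields in the $x$-direction for the $p=3$ cases (from $x^3$); for $E_6^0$ and $E_8^0$ in $p=3$ one finds two independent such fields, and for $E_7^0$ in $p=3$ and $E_8^0$ in $p=5$ a single one. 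These are precisely the ``bad prime'' phenomena, and I would record a minimal spanning set of candidate classes for each type.

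The main obstacle is to prove that these candidate derivations do not lift, i.e. that their classes are nonzero and independent in the quotient. I would verify non-liftability by carrying each derivation through an explicit sequence of blow-ups realizing the minimal resolution, and checking chart by chart that the lifted field acquires a pole along some exceptional component (equivalently, that it is not regular on $X$). This is the computationally heavy part, as it requires the explicit resolution data for $E_6$, $E_7$, $E_8$ in the relevant small characteristics; the $A_n$ case is lighter, since its resolution is toric and the behaviour of $\partial_z$ can be analysed on a single chain of charts. Linear independence of the surviving classes then yields the asserted lower bounds, and in particular $\ge 1$ for $E_6^1$ and $E_8^1$ in $p=3$.

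Finally, to upgrade the lower bounds to the exact values $1$ and $2$ I would supply matching upper bounds. For $A_n$ the toric model determines $\Der_k(R)$ and its liftable submodule completely, giving exactly $1$. For the exceptional cases I would combine the Liedtke--Satriano equality $\dim_k H^1(\Theta_X)=\#\{-2\text{ curves}\}+h^1(S_X)$ with the inequality $\dim_k H^1(\Theta_X)+\dim_k H^0(X\setminus E,\Theta_X)/H^0(X,\Theta_X)\le\tau_0$, after computing the Tjurina number $\tau_0=\dim_k R/(f,f_x,f_y,f_z)$ directly from Artin's equation in the given characteristic; together with the value of $h^1(S_X)$ in these cases this bounds the quotient from above and pins down the dimension for $E_7^0$, $E_8^0$ (in $p=5$), and $E_6^0$, $E_8^0$ (in $p=3$). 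The cases recorded as $\ge 1$ are exactly those where this numerical bound is not yet sharp, and they are completed only in the later sharp computation.
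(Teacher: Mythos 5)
Your core strategy is the paper's: identify $H^0(X\setminus E,S_X)/H^0(X,S_X)$ with $\Der_k(R)$ modulo the derivations that extend to $X$ (using $X\setminus E\cong \Spec R\setminus\{\mathfrak m\}$, reflexivity, and $H^0(X,S_X)=H^0(X,\Theta_X)$ since $H^0(\mathcal N_{E_i/X})=0$), then produce explicit non-liftable derivations from Artin's equations. The paper organizes this more economically than your chart-by-chart plan: it computes $\Der_k(R)$ exactly as the kernel of the Jacobian map $T_{\mathbf A_k^3}\otimes R\to \Hom_R(\mathcal I/\mathcal I^2,R)$ (a Koszul complex), so in each case there is a finite explicit generating set, and it controls liftability using the quadratic-transformation formulas together with the Burns--Wahl criterion that any $D$ with $D(\mathfrak m)\subset\mathfrak m$ extends to the point blow-up $\Proj\bigoplus_{i\geq 0}\mathfrak m^i$; this also allows inductive reductions (e.g.\ for $E_8^1$ in $p=3$ the derivation $y\,\partial/\partial x-x\,\partial/\partial y$ lifts to the first blow-up, where an $E_7^0$ point sits, and non-liftability is inherited from the $E_7^0$ computation). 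Where you genuinely diverge is the upper bound: the paper reads off the exact values $1$ and $2$ from the complete generating set, whereas you cap the quotient by $\tau_0-\#\{-2\text{ curves}\}-h^1(S_X)$ via Liedtke--Satriano and the Shepherd--Barron-type inequality. That route is legitimate --- it is precisely how the paper later (proof of Theorem~\ref{theorem:main01}) converts all Section~3 lower bounds into equalities --- but it carries a dependency you have not resolved.

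The dependency is this: in every case of the proposition where an exact value is claimed, except $A_n$, one has $h^1(S_X)>0$ (it equals $1$ for $E_8^0$ in $p=5$ and for $E_6^0$, $E_7^0$ in $p=3$, and $2$ for $E_8^0$ in $p=3$), so the trivial bound $h^1(S_X)\geq 0$ makes your inequality too weak (e.g.\ for $E_6^0$ in $p=3$ it would give only $q\leq 3$, not $2$). You need \emph{lower} bounds on $h^1(S_X)$, and these cannot be quoted from the main theorem, whose proof rests on this proposition; they must come from the explicit equisingular families of Proposition~\ref{prop:equisingular} (simultaneous resolutions of $z^2+x^3+y^5+sxy^4$, etc.), which precede this statement. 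Once those are in hand the count closes, e.g.\ $6+1+2=9=\tau_0$ for $E_6^0$ in $p=3$, and incidentally the same count also pins the cases you leave as ``$\geq 1$''. Separately, one concrete slip: for $E_7^0$: $z^2+x^3+xy^3$ in $p=3$ there is no derivation ``in the $x$-direction,'' since $\partial f/\partial x=3x^2+y^3=y^3\notin(f)$; the relevant non-liftable derivation is $\partial/\partial y$ (the $xy^3$ term dies because $p\,|\,3$). Likewise, for $E_6^0$ and $E_8^0$ in $p=3$ the second independent class is the $R$-multiple $y\,\partial/\partial x$ of the first, not a second coordinate field produced by your monomial mechanism. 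These are repairable, but as written your candidate lists in the $p=3$ cases are partly wrong.
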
 

\begin{proof}
A quadratic transformation $x'=x/y, y'=y, z'=z/y$ gives
equalities of derivations
$\partial/\partial x=(1/y')\partial /\partial x'$, 
$\partial /\partial y=\partial /\partial y'-(x'/y')\partial /\partial x'-
(z'/y')\partial /\partial z'$, 
$\partial /\partial z=(1/y')\partial /\partial z'$.
As was pointed out by Burns and Wahl \cite[Proposition~1.2]{Burns-Wahl}, 
any derivation $D$ of $(R, \mathfrak m)$ 
which satisfies $D(\mathfrak m)\subset \mathfrak m$ can be 
extended to a regular derivation on 
$\displaystyle \Proj \bigoplus_{i\geq 0} \mathfrak{m}^i$
(the point blow-up of $\mathfrak m$).
 
For $A_n:$ $R\cong k[[x, y, z]]/(xy+z^{n+1})$ with $p\,|\,(n+1)$,  
we have the exact sequence (cf. \cite[Theorem 25.2]{Matsumura86})
$$
0\to \left(\frac{\partial}{\partial z}, x\frac{\partial}{\partial x}-y\frac{\partial}{\partial y} \right)\to T_{\mathbf A_k^3}\otimes R
\stackrel{{\tiny( y\ x\ 0)}}{\longrightarrow}
\mathrm{Hom}_R (\mathcal I/\mathcal I^2,R ).
$$     
This is the Koszul complex associated with 
the regular sequence $x, y\in  R$.  
The derivation $\displaystyle \frac{\partial}{\partial z}\in \mathrm{Der}_k(R)$ 
does not lift
to $\displaystyle \Proj \bigoplus_{i\geq 0} \mathfrak{m}^i$
(the blow-up of $\mathfrak m$).

For $E_8^0:$ $R\cong k[[x, y, z]]/(z^2+x^3+y^5)$ in $p=5$,
one has the exact sequence 
$$
0\to \left(\frac{\partial}{\partial y}, 2z\frac{\partial}{\partial x}-3x^2\frac{\partial}{\partial z} \right)\to T_{\mathbf A_k^3}\otimes R
\stackrel{{\tiny( 3x^2\ 0\ 2z)}}{\longrightarrow} 
\mathrm{Hom}_R (\mathcal I/\mathcal I^2,R ). 
$$     
The derivation $\displaystyle \frac{\partial}{\partial y}\in \mathrm{Der}_k(R)$ 
does not lift
to the point blow-up $\Proj \bigoplus_{i\geq 0} \mathfrak{m}^i$.

For $E_7^0:$ $R\cong k[[x, y, z]]/(z^2+x^3+xy^3)$ in $p=3$, one has the 
exact sequence
$$
0\to \left(\frac{\partial}{\partial y}, z\frac{\partial}{\partial x}+y^3\frac{\partial}{\partial z} \right)\to T_{\mathbf A_k^3}\otimes R
\stackrel{{\tiny( y^3\ 0\ 2z)}}{\longrightarrow} 
\mathrm{Hom}_R (\mathcal I/\mathcal I^2,R ). 
$$     
The derivation $\displaystyle \frac{\partial}{\partial y}$ does not lift to the point blow-up.

For $E_6^0:$ $R\cong k[[x, y, z]]/(z^2+x^3+y^4)$ in $p=3$, one has
$$
0\to \left(\frac{\partial}{\partial x}, z\frac{\partial}{\partial y}+y^3\frac{\partial}{\partial z} \right)\to T_{\mathbf A_k^3}\otimes R
\stackrel{{\tiny( 0\ y^3\ 2z)}}{\longrightarrow} 
\mathrm{Hom}_R (\mathcal I/\mathcal I^2,R ). 
$$     
Two derivations $\displaystyle \frac{\partial}{\partial x}$, 
$\displaystyle y\frac{\partial}{\partial x}$ do not lift to 
the minimal resolution $X$.

For $E_8^0:$  $R\cong k[[x, y, z]]/(z^2+x^3+y^5)$ in $p=3$, one has
$$
0\to \left(\frac{\partial}{\partial x}, z\frac{\partial}{\partial y}-y^4\frac{\partial}{\partial z} \right)\to T_{\mathbf A_k^3}\otimes R
\stackrel{{\tiny( 0\ y^4\ z)}}{\longrightarrow} 
\mathrm{Hom}_R (\mathcal I/\mathcal I^2,R ). 
$$     
Two derivations $\displaystyle \frac{\partial}{\partial x}$, 
$\displaystyle y\frac{\partial}{\partial x}$ do not lift to 
the minimal resolution $X$.

For $E_8^1:$ $R\cong k[[x, y, z]]/(z^2+x^3+y^5+x^2y^3)$ in $p=3$, we have 
the derivation 
$D:=y\partial/\partial x -x\partial /\partial y\in \mathrm{Der}_k(R)$.
This satisfies $D(\mathfrak m )\subset \mathfrak m$, so this $D$ lifts to 
a point blow-up $\Proj \bigoplus_{i=0}^{\infty} \mathfrak m^i$,
on which lies a rational double point of type $E_7^0$.
But this $D$ does not lift to the minimal resolution, 
because it is the 
very element considered in $E_7^0$ above.

For $E_6^1$ in $p=3$, we have $R\cong k[[x, y, z]]/(z^2+x^3+y^4+x^2y^2)$.
The derivation 
$D:=(y-xy)\partial/\partial x+(x-y^2)\partial /\partial y+yz\partial/\partial z \in \mathrm{Der}_k(R)$
does not lift to the minimal resolution. 
\end{proof}

\section{Proof of the main theorem}\label{sec:4}

\begin{theorem}\label{theorem:main01}
Let $X\to \Spec R$ be the minimal resolution of a rational double point
defined over an algebraically closed field $k$ of characteristic $p\ne 2$.
Then the following assertions hold.
\begin{itemize}
\item [{\rm i)}] The natural morphism $H^1(S_X)\to 
H^1(X\setminus E, S_X)$
is an inclusion.
\item [{\rm ii)}] The dimension of $H_E^1(S_X\otimes \mathcal O_X(E))$
is zero except:
$$
\begin{cases}
1 \quad &\text{for}\quad E_8^0\text{ in }p=5$ and $E_6^0, E_7^0, E_8^1 \text{ in }p=3, \\
2  \quad &\text{for}\quad E_8^0\text{ in }p=3. \\
\end{cases}
$$ 
\item [{\rm iii)}]  One has an isomorphism 
$$H^0(X\setminus E, \Theta_{X})/H^0(X, \Theta_{X})\cong H_E^1(S_X),
$$
whose dimension is zero with the following exceptions:
$$
\begin{cases}
1   \quad & \text{for}\quad A_n  \text{ with }p\,|\,(n+1),\\
1  &\text{for}\quad E_8^0\text{ in }p=5 \text{ and } E_6^1, E_7^0, E_8^1\text{ in }p=3, \\
2  &\text{for}\quad E_6^0, E_8^0\text{ in }p=3. \\
\end{cases}
$$ 
\end{itemize}
\end{theorem}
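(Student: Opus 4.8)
The plan is to derive all three assertions from one sandwich estimate, after reducing everything to the two integers $a:=\dim_k H^1(S_X)$ and $b:=\dim_k H_E^1(S_X)$. First I would record the homological reductions. Since $(\Spec R,\mathfrak m)$ is a rational double point its minimal resolution is crepant, so $\mathcal O_X(K_X)\cong\mathcal O_X$ and $\mathcal O_X(E+2K_X)\cong\mathcal O_X(E)$; the finiteness and duality proposition of Section~\ref{sec:2} then specialises to $b=\dim_k H^1(S_X\otimes\mathcal O_X(E))$ and $\dim_k H_E^1(S_X\otimes\mathcal O_X(E))=a$. Hence assertion ii) is exactly the computation of $a$, and the dimension in assertion iii) is exactly $b$. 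Moreover $S_X$ and $\Theta_X$ agree on $X\setminus E$, so $H^j(X\setminus E,S_X)=H^j(X\setminus E,\Theta_X)$ for all $j$, while each $E_i$ is a $(-2)$-curve, so $\mathcal N_{E_i/X}\cong\mathcal O_{\mathbf P^1}(-2)$ and $H^0(X,\bigoplus_i\mathcal N_{E_i/X})=0$; feeding the latter into $0\to S_X\to\Theta_X\to\bigoplus_i\mathcal N_{E_i/X}\to0$ gives $H^0(X,S_X)=H^0(X,\Theta_X)$.

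Next I would set up the sequence carrying i) and iii). As $S_X$ is locally free and $E$ has codimension one, $H_E^0(S_X)=0$, so local cohomology gives
\[0\to H^0(X,S_X)\to H^0(X\setminus E,S_X)\to H_E^1(S_X)\xrightarrow{\ \delta\ } H^1(S_X)\to H^1(X\setminus E,S_X).\]
Writing $c:=\dim_k\bigl(H^0(X\setminus E,S_X)/H^0(X,S_X)\bigr)$, which by the previous paragraph equals $\dim_k\bigl(H^0(X\setminus E,\Theta_X)/H^0(X,\Theta_X)\bigr)$, this shows $c\le b$, with equality precisely when $\delta=0$, that is precisely when $H^1(S_X)\to H^1(X\setminus E,S_X)$ is injective; and when $\delta=0$ the same sequence produces the isomorphism of iii). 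Thus both i) and the value of $b$ in iii) reduce to the single equality $b=c$.

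For the sandwich, Proposition~\ref{prop:equisingular} gives the lower bound $a\ge a_0$ and the second proposition of Section~\ref{sec:3} gives $c\ge c_0$, where $a_0,c_0$ are the tabulated values; note that $c_0$ coincides with the value of $b$ claimed in iii) in every case. On the other side, the refined Shepherd--Barron inequality together with the Liedtke--Satriano formula $\dim_k H^1(\Theta_X)=\#\{(-2)\text{-curves}\}+a$ and the fact that all $n$ exceptional curves are $(-2)$-curves yield $n+a+c\le\tau_0$. I would then compute $\tau_0=\dim_k R/(f,\partial f/\partial x,\partial f/\partial y,\partial f/\partial z)$ from Artin's normal forms and verify the identity $\tau_0-n=a_0+c_0$; for instance $A_n$ with $p\mid n+1$ gives $\tau_0=n+1$, the forms $E_6^0,E_7^0$ in $p=3$ give $\tau_0=9$, and $E_8^0$ in $p=3$ gives $\tau_0=12$. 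Substituting the two lower bounds into $n+a+c\le\tau_0=n+a_0+c_0$ forces $a=a_0$ and $c=c_0$ at once, each bound using only the lower bound on the other variable, so the argument is not circular.

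Finally I would settle i). Once $\dim_k H^1(S_X)=a_0$ is known, the $a_0$ equisingular families of Proposition~\ref{prop:equisingular} span $H^1(S_X)$, and that proposition also records that they stay linearly independent in $H^1(X\setminus E,S_X)$; hence $H^1(S_X)\to H^1(X\setminus E,S_X)$ is injective, which is vacuous when $a_0=0$. This is $\delta=0$, so $b=c=c_0$, giving the dimension in iii), and the displayed sequence then yields the isomorphism of iii); assertion ii) is the already established $\dim_k H_E^1(S_X\otimes\mathcal O_X(E))=a=a_0$. I expect the main obstacle to be the sharp determination of $\tau_0$ in the degenerate cases $E_6^1$ and $E_8^1$ in $p=3$, where Section~\ref{sec:3} provides only $c\ge1$: there the explicit values $\tau_0=7$ and $\tau_0=10$ are what close the sandwich and, combined with the non-collapsing of the single equisingular family of $E_8^1$, deliver the injectivity in i).
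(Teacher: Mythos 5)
Your proposal is correct and follows essentially the same route as the paper: lower bounds for $h^1(S_X)$ and for $\dim_k H^0(X\setminus E,\Theta_X)/H^0(X,\Theta_X)$ from the equisingular families and non-lifting derivations of Section~\ref{sec:3}, the upper bound $\#\{-2\text{ curves}\}+h^1(S_X)+\dim_k H^0(X\setminus E,\Theta_X)/H^0(X,\Theta_X)\leq\tau_0$ from Proposition 2.2 combined with the Liedtke--Satriano formula, closed by the Tjurina numbers in Artin's list, with the duality identifications, the local cohomology sequence, and the non-collapsing clause of Proposition~\ref{prop:equisingular} supplying i) and iii) exactly as the paper leaves implicit. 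The only cosmetic difference is that the paper dispatches the good/very-good prime cases by citing Wahl's Theorems C and D, whereas your sandwich treats them uniformly at the cost of also verifying $\tau_0=\#\{-2\text{ curves}\}$ in those cases.
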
 

\begin{proof}
If the characteristic $p$ is a good prime (resp. very good prime) 
for the type of the rational 
double point $(\Spec R, \mathfrak m)$, 
Wahl's Theorem D (resp. Theorem C) provides 
the assertions i) and ii) (resp. iii)).
If $p$ is not a very good prime,    
one needs to show  that the lower estimates 
given in the previous section 
attain indeed the actual values.
This is trivially verified, since one has the inequality
coming from Proposition 2 and Theorem 1, 
$$
\#\{-2 \mbox{ curves in }E\}+h^1(S_X)+\dim_k H^0(X\setminus E, \Theta_{X})/H^0(X, \Theta_{X})\leq \tau_0.
$$
The Tjurina numbers in Artin's list~\cite{Artin77} say 
this is indeed an equality. 
\end{proof}

\begin{remark}
The pro-representable hull of equisingular deformations
of $X$ injects into a versal deformation of the rational
double point $(\Spec R, \mathfrak m)$. 
This forms a nonsingular subvariety whose dimension is 
as prescribed in  Theorem~\ref{theorem:main01}, ii).
One gets concrete one-parameter families given in the proof of 
Proposition~\ref{prop:equisingular}.
For $E_8^0\text{ in }p=3$, one has a two-parameter family: 
$z^2+x^3+y^5+s_1x^2y^3+s_2x^2y^2=0$
over $\Spec k[[s_1, s_2]]$.
Two strata of dimension one and zero respectively
can be observed in it. 
\end{remark}

\section{Characteristic $2$}\label{sec:5}

As is often the case with characteristic $2$, computation becomes 
more involved and demanding.
However, we can complete our evaluation of dimensions essentially 
in the same way as before.

\begin{theorem}\label{theorem:main02}
Let $X\to \Spec R$ be the minimal resolution of a rational double point
defined over an algebraically closed field $k$ of characteristic $2$.
Then we have the following assertions.
\begin{itemize}
\item [{\rm i)}] The natural morphism $H^1(S_X)\to 
H^1(X\setminus E, S_X)$
is an inclusion.
\item [{\rm ii)}] The dimension of $H_E^1(S_X\otimes \mathcal O_X(E))$
is zero except:
$$
\begin{cases}
1 \quad &\text{for}\quad E_6^0, E_7^2, E_8^3, \\
2 \quad &\text{for}\quad E_7^1, E_8^2, \\
3 \quad &\text{for}\quad E_7^0, E_8^1, \\
4 \quad &\text{for}\quad E_8^0, \\
n-1-r \quad &\text{for}\quad D_{2n}^r, D_{2n+1}^r.\\
\end{cases}
$$ 
\item [{\rm iii)}]  One has an isomorphism 
$$H^0(X\setminus E, \Theta_{X})/H^0(X, \Theta_{X})\cong H_E^1(S_X),
$$
whose dimension is zero with the following exceptions:
$$
\begin{cases}
1   \quad & \text{for}\quad A_n  \text{ with }2\,|\,(n+1),\\
1  \quad&\text{for}\quad E_6^0, E_7^3, E_8^3, \\
2  \quad&\text{for}\quad E_7^2, E_8^2, \\
3  \quad&\text{for}\quad E_7^1, E_8^1, \\
4  \quad&\text{for}\quad E_7^0, E_8^0, \\
n+1-r  \quad&\text{for}\quad D_{2n}^r, \\
n-r  \quad&\text{for}\quad D_{2n+1}^r. \\
\end{cases}
$$ 
\end{itemize}
\end{theorem}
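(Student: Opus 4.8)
The plan is to follow the same strategy as in the proof of Theorem~\ref{theorem:main01}, reducing everything to the chain of inequalities
$$
\#\{-2\text{ curves in }E\}+h^1(S_X)+\dim_k H^0(X\setminus E,\Theta_X)/H^0(X,\Theta_X)\leq \tau_0,
$$
which comes from combining Proposition~2 with Theorem~1, and then checking that the Tjurina numbers in Artin's list force equality. For the cases where $p=2$ is a good prime, Wahl's Theorem~D already gives assertions i) and ii), so the real work is confined to the bad primes, namely the $D_n$ and $E_n$ families in characteristic~$2$. First I would assemble, case by case, the lower bounds for $\dim_k H^0(X\setminus E,\Theta_X)/H^0(X,\Theta_X)$ by exhibiting explicit non-liftable derivations exactly as in Propositions~\ref{prop:equisingular} and the proposition following it, but now adapted to the characteristic~$2$ equations $z^2+\cdots$ in Artin's classification of $D_n^r$, $E_6^r$, $E_7^r$, $E_8^r$.

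The organizing principle is that the three quantities in assertion iii) and assertion ii) are linked through Theorem~1 and Proposition~3: Theorem~1 gives
$$
\dim_k H_E^1(\Theta_X)=\#\{-2\text{ curves in }E\}+h^1(S_X),
$$
and the short exact sequence appearing in its proof identifies the difference between $H_E^1(\Theta_X)$ and $H_E^1(S_X(E))$ with $H_E^0(\Theta_E\otimes\mathcal N_{E/X})$. So once I know the two dimensions in assertion ii) (the $H_E^1(S_X\otimes\mathcal O_X(E))$ numbers) and the number of $(-2)$-curves, assertion iii) is forced, and conversely. Concretely I would first pin down the lower bounds in iii) via explicit derivations and equisingular families, then use the inequality above together with the exact Tjurina numbers $\tau_0$ from Artin's list to upgrade every lower bound to an equality; the numerology in the theorem statement ($n+1-r$ for $D_{2n}^r$, $n-r$ for $D_{2n+1}^r$, and the descending $4,3,2,1$ pattern for the $E_n^r$) is then read off directly. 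Assertion i), that $H^1(S_X)\hookrightarrow H^1(X\setminus E,S_X)$, follows from the identification in assertion iii) together with the observation that the equisingular families constructed do not collapse under the tangent map, exactly as in Proposition~\ref{prop:equisingular}.

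The hard part will be the bookkeeping in characteristic~$2$: producing the correct explicit derivations for the full $D_n^r$ series and verifying that they are genuinely non-liftable to the minimal resolution while still preserving the maximal ideal (so that they lift to the first point blow-up, by the Burns--Wahl criterion cited earlier). For the $D_n^r$ cases the dependence on both the index parity ($D_{2n}$ versus $D_{2n+1}$) and the Artin parameter $r$ means the derivations must be chosen uniformly in $n$ and $r$, and each must be tracked through the successive blow-ups resolving $D_n$ to confirm exactly at which stage it fails to lift; this determines the codimension $n+1-r$ (resp.\ $n-r$) and is where the computation is most delicate. A secondary subtlety is that in characteristic~$2$ the defining equation $z^2+\cdots$ makes $\partial/\partial z$ behave degenerately, so the Koszul-type presentation of $\Hom_R(\mathcal I/\mathcal I^2,R)$ used in the odd-characteristic cases must be replaced by the correct Jacobian relations for the characteristic~$2$ normal forms; I would set up these presentations first and then extract the generators of the derivation module that survive to $X\setminus E$ but not to $X$.
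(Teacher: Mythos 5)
You follow the paper's own proof essentially step for step: concrete one-parameter families admitting simultaneous resolution without base extension give the lower bounds for $h^1(S_X)$ (equivalently, by the duality in Proposition~2.3 and the triviality of $K_X$, for the dimensions of $H_E^1(S_X\otimes\mathcal O_X(E))$ in ii)); explicit derivations of $(R,\mathfrak m)$ that do not lift to $X$ give the lower bounds for $\dim_k H^0(X\setminus E,\Theta_X)/H^0(X,\Theta_X)$, with the $D_n^r$ series handled uniformly in $n$ and $r$ (this is exactly the content of the paper's Lemma~5.2, proved by induction on $n$); and the inequality
$$
\#\{-2 \mbox{ curves in }E\}+h^1(S_X)+\dim_k H^0(X\setminus E,\Theta_X)/H^0(X,\Theta_X)\leq\tau_0,
$$
coming from Proposition~2.2 and Theorem~2.4 and matched against the Tjurina numbers in Artin's list, turns every lower bound into an equality; assertion i) then follows because the equisingular classes span $H^1(S_X)$ and do not collapse in $H^1(X\setminus E,S_X)$, as in Proposition~\ref{prop:equisingular}. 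Your identification of the delicate points --- tracking the $D_n^r$ derivations through the successive blow-ups, and replacing the odd-characteristic Koszul presentations because $\partial/\partial z$ annihilates $z^2$ in characteristic $2$ --- also matches where the paper's actual computational content lies.

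One assertion in your outline is false, although your concrete plan does not end up relying on it: you claim that assertion iii) is ``forced'' by the dimensions in ii) together with the number of $(-2)$-curves, via Theorem~2.4 and Proposition~2.3, ``and conversely.'' The exact sequence in the proof of Theorem~2.4 relates $H_E^1(\Theta_X)$, $H_E^1(S_X(E))$ and $H_E^0(\Theta_E\otimes\mathcal N_{E/X})$; the group in iii) is the untwisted $H_E^1(S_X)$, which does not occur in that sequence. The theorem's own tables show no such forcing can exist: the difference between the dimensions in iii) and in ii) equals $0$ for every $E_8^r$ and $E_6^r$, $1$ for every $E_7^r$ and every $D_{2n+1}^r$, and $2$ for every $D_{2n}^r$, so it is not a function of the $(-2)$-count. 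The dimension in iii) genuinely requires the derivation lower bounds together with the injectivity in i), which is equivalent to the isomorphism in iii) via the exact sequence
$$
0\to H^0(X\setminus E,S_X)/H^0(X,S_X)\to H_E^1(S_X)\to\ker\left(H^1(S_X)\to H^1(X\setminus E,S_X)\right)\to 0.
$$
Since you do supply both ingredients later in the plan, the argument goes through once that remark is deleted or corrected.
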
 

First we take care of derivations of rational double points of type $D_n$.
\begin{lemma}
For a rational double point of type $D$ in characteristic $2$,
the following derivations do not lift to the minimal resolution.
\begin{itemize}
\item[] $D_{2n}^0:$ $R\cong k[[x, y, z]]/(z^2+x^2y+xy^n)$. 
$$x\frac{\partial}{\partial z},\  \frac{\partial}{\partial z},\  
y\frac{\partial}{\partial z},\  y^2\frac{\partial}{\partial z}, \ \dots \ ,
\  y^{n-1}\frac{\partial}{\partial z}. $$
\item[] $D_{2n}^r:$ $R\cong k[[x, y, z]]/(z^2+x^2y+xy^n+xy^{n-r}z)$ 
with $r=1, 2, \dots, n-1$.
\begin{eqnarray*} &&nxy^{n-r-1}\frac{\partial}{\partial x}+y^{n-r}\frac{\partial}{\partial y}+(x+ry^{n-r-1}z)\frac{\partial}{\partial z},\  \mathfrak d_1,\  y\mathfrak d_1,\  y^2\mathfrak d_1, \ \dots\ , 
\ y^{n-r-1}\mathfrak d_1 \\
&&\qquad \text{ with }\  \mathfrak d_1:=x\frac{\partial}{\partial x}+\left(y^r+z\right)\frac{\partial}{\partial z}.
\end{eqnarray*}
\item[] $D_{2n+1}^0:$ $R\cong k[[x, y, z]]/(z^2+x^2y+y^nz)$.
$$\frac{\partial}{\partial x},\  
y\frac{\partial}{\partial x}, \ y^2\frac{\partial}{\partial x},
\ \dots \ ,\  y^{n-1}\frac{\partial}{\partial x}. $$
\item[] $D_{2n+1}^r:$ $R\cong k[[x, y, z]]/(z^2+x^2y+y^nz+xy^{n-r}z)$ with $r=1, 2, \dots, n-1$.
$$ \mathfrak d_2,\  y\mathfrak d_2,\  y^2\mathfrak d_2,\  \dots \ ,\  
y^{n-r-1}\mathfrak d_2
\quad \text{ with }\quad \mathfrak d_2:=\left(x+y^r\right)\frac{\partial}{\partial x}+z\frac{\partial}{\partial z}.
$$
\end{itemize}
\end{lemma}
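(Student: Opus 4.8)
The statement ``does not lift to the minimal resolution'' means, after identifying $\Der_k(R)$ with $H^0(X\setminus E,\Theta_X)$ (legitimate because $R$ is normal and $X\setminus E\cong\Spec R\setminus\{\mathfrak m\}$), that the derivation fails to extend to a global section of $\Theta_X$; equivalently, its pullback acquires a pole along some component of $E$. The plan is therefore to exhibit, for each operator on the list, an explicit chart in the resolution tower where a pole appears. First I would confirm that each listed operator genuinely lies in $\Der_k(R)$, i.e.\ that $D(f)\in(f)$ for the defining polynomial $f$. In characteristic $2$ this is a one-line check, the terms cancelling in pairs: for $D_{2n}^r$ one has $f_z=xy^{n-r}$ and $f_x=y^n+y^{n-r}z$, so $\mathfrak d_1(f)=x f_x+(y^r+z)f_z=2xy^n+2xy^{n-r}z=0$. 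The two ``free'' families $y^j\partial/\partial z$ for $D_{2n}^0$ and $y^j\partial/\partial x$ for $D_{2n+1}^0$ are derivations simply because $f_z=0$, respectively $f_x=0$, in characteristic $2$.

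The engine of the argument is the behaviour of derivations under a point blow-up, tracked through the quadratic transformations exactly as in Section~\ref{sec:3}. I would blow up $\mathfrak m$ and pass to the charts of $\Proj\bigoplus_{i\ge 0}\mathfrak m^i$; the transformation rules express $\partial/\partial z$ (or $\partial/\partial x$) as $(1/y)\,\partial/\partial z_1$, so a monomial coefficient $y^j$ in front either cancels the factor $1/y$ coming from the exceptional coordinate --- in which case the operator satisfies $D(\mathfrak m)\subset\mathfrak m$, hence lifts to the blow-up by the Burns--Wahl criterion \cite[Proposition~1.2]{Burns-Wahl} and descends to the analogous operator for the singularity appearing there --- or fails to cancel it, producing a pole. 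A direct computation of the three charts shows that blowing up $D_{2n}^0$ (resp.\ $D_{2n+1}^0$) yields $D_{2n-2}^0$ (resp.\ $D_{2n-1}^0$) in one chart together with an $A_1$-point in a second, the third being smooth; the same must be verified for the deformed families, where one additionally checks that the transformed equation is again of type $D$ with $n$ and $r$ lowered compatibly.

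With this in hand I would argue by induction on the length of the $D$-chain. For $D_{2n}^0$ the derivation $y^j\partial/\partial z$ transforms to $y^{j-1}\partial/\partial z_1$ on $D_{2n-2}^0$, so after $j$ blow-ups it becomes the bare $\partial/\partial z$ on $D_{2n-2j}^0$, whose pullback visibly has a pole; since $j\le n-1$ this occurs strictly before the chain terminates. The compound derivations $\mathfrak d_1$, $\mathfrak d_2$ and the leading derivation of $D_{2n}^r$ are carried along the same induction: one checks that each transforms into a derivation of the same shape for the reduced equation, so that $y^j\mathfrak d_1$ (resp.\ $y^j\mathfrak d_2$) reduce step by step until the coefficient is exhausted and a pole emerges. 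The families $D_{2n+1}^0$ and $D_{2n+1}^r$ are treated identically, now using the $\partial/\partial x$-charts.

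The step I expect to be the main obstacle is the bookkeeping at the fork of the $D$-diagram, which governs the operators carrying an $x$-coefficient, such as $x\,\partial/\partial z$ for $D_{2n}^0$. Tracking $x\,\partial/\partial z$ down the $D$-chain alone does \emph{not} produce a pole --- it stays of the form $x_j\,\partial/\partial z_j$ --- so its failure to lift must instead be detected in the chart containing the $A_1$-point, where it becomes the free derivation $\partial/\partial z_1$, and one invokes the fact that the free derivation on a node $z^2+xy$ in characteristic $2$ (the case $A_1$, i.e.\ $p\mid(n+1)$) does not lift, exactly as recorded for $A_n$ in Section~\ref{sec:3}. Getting the characteristic-$2$ coefficients to cancel correctly through successive charts, distinguishing which branch detects each pole, and matching the resulting count of non-liftable derivations with the dimensions asserted in assertion (iii) of Theorem~\ref{theorem:main02}, is where the real care is required.
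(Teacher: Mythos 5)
Your proposal is correct and follows essentially the same route as the paper, whose entire proof of this lemma reads ``Local calculation based on induction on $n\geq 2$'': your chart-by-chart tracking of derivations through the point blow-ups (using the quadratic-transformation rules and the Burns--Wahl criterion), with the $D$-chain shortening at each step and the operators carrying an $x$-coefficient detected at the $A_1$ fork, is exactly that local inductive calculation spelled out. The details you verify (the derivation checks in characteristic $2$, $y^j\partial/\partial z\mapsto y^{j-1}\partial/\partial z_1$ on $D_{2n-2}^0$, and the reduction of $x\,\partial/\partial z$ to the non-liftable $\partial/\partial z_1$ on the node) are accurate, so your sketch is a faithful expansion of the paper's argument rather than a different one.
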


\begin{proof}
Local calculation based on induction on $n\geq 2$. 
\end{proof}

\begin{proof}{\it\!of Theorem~\ref{theorem:main02} }
Each concrete one-parameter deformation $\mathcal X\to \Spec k[s]$ presented below
admits a simultaneous resolution without base extension,
providing a non-trivial equisingular deformation of the minimal resolution $X$.
   
For $D_{2n}^0$, one chooses an integer $k\in \{1, 2, \dots, n-1\}$.
The family is  $z^2+x^2y+xy^n+sxy^{n-k}z=0$, which has the singularity 
$D_{2n}^0$ on the special fiber $\mathcal X_0$,  and singularity $D_{2n}^k$ 
on a general fiber $\mathcal X_s$ $(s\ne 0)$.
For $D_{2n}^r$ wtih $1\leq r\leq n-2$, one chooses an integer $k\in \{r+1, r+2, \dots, n-1\}$.
The family is  $z^2+x^2y+xy^n+xy^{n-r}z+sxy^{n-k}z=0$, which has
$D_{2n}^r$ on the special fiber $\mathcal X_0$,  and $D_{2n}^k$ 
on a general fiber $\mathcal X_s$ $(s\ne 0)$.

For $D_{2n+1}^0$, one chooses an integer $k\in \{1, 2, \dots, n-1\}$.
The family is  $z^2+x^2y+y^nz+sxy^{n-k}z=0$, which has
$D_{2n+1}^0$ on the special fiber $\mathcal X_0$,  and $D_{2n+1}^k$ 
on a general fiber $\mathcal X_s$ $(s\ne 0)$.
For $D_{2n+1}^r$ wtih $1\leq r\leq n-2$, one chooses an integer $k\in \{r+1, r+2, \dots, n-1\}$.
The family is  $z^2+x^2y+y^nz+xy^{n-r}z+sxy^{n-k}z=0$, which has
$D_{2n+1}^r$ on $\mathcal X_0$,  and $D_{2n+1}^k$ on $\mathcal X_s$ $(s\ne 0)$.

For $E_6^0$, one chooses $z^2+x^3+y^2z+sxyz=0$, 
which has the singularity $E_6^0$ on $\mathcal X_0$,  and $E_6^1$ on $\mathcal X_s$ $(s\ne 0)$.

For $E_7^r$ with $r=0, 1, 2$, one chooses an integer $k\in \{r+1, r+2, \dots, 3\}$.
The deformation is given by $z^2+x^3+xy^3+\eta_r+s\eta_k=0$,
where $\eta_0:=0$, $\eta_1:=x^2yz$, $\eta_2:=y^3z$, $\eta_3:=xyz$.
This has the singularity $E_7^r$ on the special fiber $\mathcal X_0$,  and 
$E_7^k$ on $\mathcal X_s$ $(s\ne 0)$.

For $E_8^r$ with $r=0, 1, 2, 3$, one chooses an integer $k\in \{r+1, r+2, \dots, 4\}$.
The deformation is $z^2+x^3+y^5+\theta_r+s\theta_k=0$,
where $\theta_0:=0$, $\theta_1:=xy^3z$, $\theta_2:=xy^2z$, $\theta_3:=y^3z$, $\theta_4:=xyz$.
This has the singularity $E_8^r$ on the special fiber $\mathcal X_0$,  and 
$E_8^k$ on a general fiber $\mathcal X_s$ $(s\ne 0)$.

Hereafter, we give derivations of $(R, \mathfrak m)$
which do not lift to the minimal resolution.
For $A_n$, the derivation is exactly of the same form as before, so we omit it.

For $E_6^0:$ $R\cong k[[x, y, z]]/(z^2+x^3+y^2z)$, one has the exact sequence with
$\mathcal I=(z^2+x^3+y^2z)$,
$$
0\to \left(y^2\frac{\partial}{\partial x}+x^2\frac{\partial}{\partial z}, \frac{\partial}{\partial y}  \right)\to T_{\mathbf A_k^3}\otimes R
\stackrel{{\tiny( x^2\ 0\ y^2)}}{\longrightarrow} 
\mathrm{Hom}_R (\mathcal I/\mathcal I^2,R ). 
$$     
The derivation $\displaystyle \frac{\partial}{\partial y}\in \mathrm{Der}_k(R)$ 
does not lift to the point blow-up 
$\displaystyle\Proj \bigoplus_{i\geq 0} \mathfrak m^i$.

For $E_7^0:$ $R\cong k[[x, y, z]]/(z^2+x^3+xy^3)$,
one has the exact sequence with $\mathcal I=(z^2+x^3+xy^3)$,
$$
0\to \left(xy^2\frac{\partial}{\partial x}+(x^2+y^3)\frac{\partial}{\partial y}, 
\frac{\partial}{\partial z} \right)\to T_{\mathbf A_k^3}\otimes R
\stackrel{{\tiny( x^2+y^3\ xy^2\ 0)}}{\longrightarrow} 
\mathrm{Hom}_R (\mathcal I/\mathcal I^2,R ). 
$$     
Four derivations $\displaystyle \frac{\partial}{\partial z},
\  x\frac{\partial}{\partial z},\  y\frac{\partial}{\partial z}, 
\ y^2\frac{\partial}{\partial z}\in \mathrm{Der}_k(R)$ 
do not lift to the minimal resolution $X$.

For $E_8^0:$ $R\cong k[[x, y, z]]/(z^2+x^3+y^5)$,
one has the exact sequence 
$$
0\to \left(y^4\frac{\partial}{\partial x}+x^2\frac{\partial}{\partial y}, \frac{\partial}{\partial z} \right)\to T_{\mathbf A_k^3}\otimes R
\stackrel{{\tiny( x^2\ y^4\ 0)}}{\longrightarrow} 
\mathrm{Hom}_R (\mathcal I/\mathcal I^2,R ). 
$$     
Four derivations 
$\displaystyle \frac{\partial}{\partial z},\  y\frac{\partial}{\partial z},
\  y^2\frac{\partial}{\partial z}, 
\ x\frac{\partial}{\partial z}$ do not lift
to the minimal resolution $X$.

For $E_7^1:$ $R\cong k[[x, y, z]]/(z^2+x^3+xy^3+x^2yz)$.
The following three derivations do not lift to the minimal resolution.
$$ xy\frac{\partial}{\partial x}+y^2\frac{\partial}{\partial y}+(yz+x)\frac{\partial}{\partial z}, \ \mathfrak d_3, \ y\mathfrak d_3   \ \text{ with }
\mathfrak d_3=xz\frac{\partial}{\partial x}+(yz+x)\frac{\partial}{\partial y}+(z^2+y)\frac{\partial}{\partial z}.
$$ 

For $E_7^2:$ $R\cong k[[x, y, z]]/(z^2+x^3+xy^3+y^3z)$.
The following two derivations do not lift to the minimal resolution.
$$y\frac{\partial}{\partial y}+\left(x+z\right)\frac{\partial}{\partial z},\  
y\left(y\frac{\partial}{\partial y}+\left(x+z\right)\frac{\partial}{\partial z}\right)$$ 

For $E_7^3:$ $R\cong k[[x, y, z]]/(z^2+x^3+xy^3+xyz)$.
The following derivation does not lift to the minial resolution.
$$\displaystyle y\frac{\partial}{\partial y}+\left(y^2+z\right)\frac{\partial}{\partial z}$$ 

For $E_8^1:$ $R\cong k[[x, y, z]]/(z^2+x^3+y^5+xy^3z)$.
The following three derivations do not lift to the minimal resolution.
$$y^3\frac{\partial}{\partial x}+y^2z\frac{\partial}{\partial y}+\left(yz^2+x\right)\frac{\partial}{\partial z},\ 
\mathfrak d_4,\  y\mathfrak d_4
\text{ with } \mathfrak d_4:=y^2z\frac{\partial}{\partial x}+\left(yz^2+x\right)\frac{\partial}{\partial y}+\left(z^3+y\right)\frac{\partial}{\partial z}.
$$ 

For $E_8^2:$ $R\cong k[[x, y, z]]/(z^2+x^3+y^5+xy^2z)$.
The following two derivations do not lift to the minimal resolution.
$$ x\frac{\partial}{\partial y}+y^2\frac{\partial}{\partial z},\ 
y^2\frac{\partial}{\partial x}+ z\frac{\partial}{\partial y}+x\frac{\partial}{\partial z}
$$ 

For $E_8^3:$ $R\cong k[[x, y, z]]/(z^2+x^3+y^5+y^3z)$.
The following derivation does not lift to the minimal resolution.
$$\displaystyle y\frac{\partial}{\partial y}+\left(y^2+z\right)\frac{\partial}{\partial z}$$ 

We combine these with the previous lemma, 
inequalities in Preliminaries and
the Tjurina numbers in Artin's list~\cite{Artin77} 
to get the assertions i), ii), iii). 
\end{proof}

\specialsection*{Acknowledgement}
I would like to express my sincere gratitude to 
Professors Kei-ichi~Watanabe, Shihoko~Ishii,
Tadashi~Tomaru, Masataka~Tomari for valuable suggestions,
inspirations and
especially for having encouraged me to participate in the seminar on 
singularities since I was a student in Master's course.

\bibliographystyle{amsplain}

\end{document}